\documentclass[letter,10pt]{article} \usepackage[utf8]{inputenc} \usepackage{amsfonts}
\usepackage{amsthm} \usepackage{fancyhdr} \usepackage{comment} \usepackage{times}
\usepackage{amsmath} \usepackage{amsrefs} \usepackage{mathtools}
\usepackage{esint} \usepackage{changepage} \usepackage{amssymb} \usepackage{graphicx}
\usepackage{tikz} \usepackage{tikz-cd} \usepackage{titlesec} \usetikzlibrary{arrows}
\usepackage[mathscr]{euscript} \setcounter{MaxMatrixCols}{30} \theoremstyle{definition}
\newtheorem{theorem}{Theorem}[section]

\newtheorem{proposition}[theorem]{Proposition}


 \newcommand{\R}{\mathbb{R}} \newcommand{\C}{\mathbb{C}}
\newcommand{\Z}{\mathbb{Z}} \newcommand{\N}{\mathbb{N}}

\newcommand{\supp}{\operatorname{supp}} \newcommand{\dist}{\operatorname{dist}}
 
 \newcommand{\Iso}{\operatorname{Iso}}


\title{Extremizers for Adjoint Restriction to Pairs of Translated Paraboloids} \author{James Tautges}

\begin{document}
	
\maketitle
	
\begin{abstract}
    Consider the adjoint restriction inequality associated with the hypersurface
    $\{(\tau, \xi) \in \R^{d+1} : \tau = |\xi|^2\} \cup \{(\tau, \xi) \in \R^{d+1} : \tau - \tau_0 =
    |\xi - \xi_0|^2\}$ for any $(\tau_0, \xi_0) \neq 0$. We prove that extremizers do not
    exist for this inequality and fully characterize extremizing sequences in terms of extremizers
    for the adjoint restriction inequality for the paraboloid.
\end{abstract}
	
\section{Introduction}
	
Fix $d \in \N$ and define
\begin{equation}
    \mathcal{E} f(t,x) = \int e^{i (t,x)(|\xi|^2, \xi)} f(\xi) d\xi.
\end{equation}
It is conjectured that
\begin{equation}\label{first sharp const statement}
    \sup_{f \in L^p} \frac{\|\mathcal{E}f\|_q}{\|f\|_p} = A_p < \infty
\end{equation}
for $q > p$ and $q = \frac{d+2}{d}p'$. We will call $p$ and $q$ for which \eqref{first sharp const
  statement} holds ``valid."

There are many recent results about the class of functions $f$ for which
$\|\mathcal{E}f\|_q = A_p\|f\|_p$ and the related question of sequences $\{f_n\}$ such that
$\lim \|\mathcal{E}f_n\|_q/\|f_n\|_p = A_p$. We will call such an $f$ an extremizer for
$\mathcal{E}$ and such $\{f_n\}$ an extremizing sequence. Extremizers were shown to exist in the
case $p=2$ in all dimensions by Shao (\cite{Shao}) and this was extended to all valid $p,q$ in the
interior of the set of all valid $p,q$ by Stovall (\cite{Stovall}). Those results also prove that
extremizing sequences are precompact modulo the symmetries of $\mathcal{E}$, which will be an
important tool in this paper.

As for the value of $A_p$ and the extremizers themselves, Foschi proved that Gaussians
are the unique extremizers for $\mathcal{E}$ for $p = 2$ and $d \in \{1,2\}$, conjecturing that this
was the case for all dimensions (\cite{Foschi 2007}). Christ and Quilodran (\cite{Christ Quilodran})
showed that this conjecture is essentially sharp by proving that Gaussians are only critical points
for the functional $f \mapsto \|\mathcal{E}f\|_q/\|f\|_p$ if $p = 2$. There are similar results
known for restriction to the sphere (e.g.\ \cite{Christ Shao}, \cite{Foschi 2015}, \cite{Frank Lieb
  Sabin}). See Foschi and e Silva's survey (\cite{Foschi e Silva 2017}) for a more comprehensive
collection of known results.

This paper will deal with the related operator
\[
    \mathcal{E}f(t,x) + \mathcal{E}_{(\tau_0, \xi_0)} g(t,x) := \int e^{i(t,x)\cdot(|\xi|^2, \xi)}f(\xi)d\xi + \int e^{i(t,x)\cdot(|\xi - \xi_0|^2 + \tau_0, \xi)} g(\xi)d\xi,
\]
the extension operator associated to a pair of translated paraboloids.
	
\subsection*{Symmetries and Definitions}
We define subgroups $\mathbf{S} \subset \Iso(L^p(\R^d))$ and
$\mathbf{T} \subset \Iso(L^q(\R^{d+1}))$, which are related by
$\mathcal{E} \circ \mathbf{S} = \mathbf{T} \circ \mathcal{E}$, generated by isometries that generate
non-compact subgroups:
\begin{equation*}
    \begin{array}{lll}
      & Sf(\xi) & T\mathcal{E}f(t,x)\\
      \text{Scaling} & \lambda^{d/p}f(\lambda \xi) & \lambda^{-(d+2)/q}\mathcal{E}f(\lambda^{-2}t, \lambda^{-1} x) \\
      \text{Frequency Translation} & f(\xi - \widetilde{\xi}) & e^{i(t|\widetilde{\xi}|^2 + x\cdot \widetilde{\xi})}\mathcal{E}f(t, x+2t\widetilde{\xi}) \\
      \text{Spacetime Translation} & e^{i(t_0, x_0)(|\xi|^2, \xi)}f(\xi) & \mathcal{E}f(t + t_0, x + x_0).
    \end{array}
\end{equation*}
	
We can write any symmetry $S \in \mathbf{S}$ as
\begin{equation}\label{canonical symmetry S+}
    Sf(\xi) = \lambda^{d/p}e^{i(t_0, x_0)(|\lambda\xi - \widetilde{\xi}|^2, \lambda\xi - \widetilde{\xi})}f(\lambda\xi - \widetilde{\xi}),
\end{equation}
and the corresponding $T \in \mathbf{T}$ as
\begin{equation}\label{canonical symmetry T+}
    TF(t,x) = \lambda^{-(d+2)/q}e^{i(\lambda^{-2}t|\widetilde{\xi}|^2 + \lambda^{-1}x\cdot\widetilde{\xi})}F(\lambda^{-2}t + t_0, \lambda^{-1}x + x_0 + 2\lambda^{-2}t\widetilde{\xi}),
\end{equation}
for some $\lambda \in \R^+$, $(t_0, x_0) \in \R \times \R^d$, and $\widetilde{\xi} \in \R^d$.
	
Let
\begin{equation*}
    \mathcal{E}_{(\tau_0, \xi_0)} f(t,x) = \int e^{i(t,x)(|\xi-\xi_0|^2 + \tau_0, \xi)}f(\xi)d\xi
\end{equation*}
be the extension operator associated with the surface
$P_{(\tau_0,\xi_0)} = \{(\tau, \xi) : \tau = |\xi - \xi_0|^2 + \tau_0\}$. Then, although
$\mathcal{E}_{(\tau_0, \xi_0)} \circ \mathbf{S} \neq \mathbf{T} \circ \mathcal{E}_{(\tau_0,
  \xi_0)}$, the generators of $\mathbf{T}$ pass through $\mathcal{E}_{(\cdot,\cdot)}$ as
\begin{equation}\label{translated symmetries}
    \begin{split}
        \lambda^{-(d+2)/q}\mathcal{E}_{(\tau_0, \xi_0)}g(\lambda^{-2}t, \lambda^{-1} x) &= \mathcal{E}_{(\lambda^{-2}\tau_0, \lambda^{-1} \xi_0)}(\lambda^{d/p}g(\lambda\xi))(t,x), \\
        e^{i(t|\widetilde{\xi}|^2 + x\cdot \widetilde{\xi})}\mathcal{E}_{(\tau_0, \xi_0)}g(t, x+2t\widetilde{\xi}) &= \mathcal{E}_{(\tau_0 + 2\xi_0 \cdot \widetilde{\xi}, \xi_0)}(g(\xi - \widetilde{\xi}))(t, x), \\
        \mathcal{E}_{(\tau_0, \xi_0)}g(t + t_0, x + x_0) &= \mathcal{E}_{(\tau_0, \xi_0)}(e^{i(t_0,
          x_0)(|\xi - \xi_0|^2 + \tau_0, \xi)}g(\xi))(t,x).
    \end{split}
\end{equation}
Hence, for every $T \in \mathbf{T}$ written in the form of \eqref{canonical symmetry T+} we have
\begin{multline}\label{canonical symmetry T'}
    T\mathcal{E}_{(\tau_0, \xi_0)} g(t,x) \\
    = \mathcal{E}_{(\lambda^{-2}(\tau_0 + 2\xi_0\cdot \widetilde{\xi}), \lambda^{-1}\xi_0)}
    [\lambda^{d/p}e^{i(t_0, x_0)\cdot(|\lambda\xi - \widetilde{\xi} - \xi_0|^2 + \tau_0, \lambda\xi -
      \widetilde{\xi})}g(\lambda\xi - \widetilde{\xi})](t,x).
\end{multline}

\subsection*{Main Results}
	
From now on we will assume $p,q$ are exponents such that $A_p < \infty$.
	
\begin{theorem}\label{translated theorem}
    Assume $|\tau_0| + |\xi_0| \neq 0$.
    \begin{enumerate}
        \item
        \[
            \sup_{f,g \in L^p} \frac{\|\mathcal{E}f + \mathcal{E}_{(\tau_0, \xi_0)}g\|_q}{(\|f\|_p^p
              + \|g\|_p^p)^{1/p}} = 2^{1/p'}A_p
        \]
        \item For all $f, g \in L^p$,
        \[
            \frac{\|\mathcal{E}f + \mathcal{E}_{(\tau_0,\xi_0)}g\|_q}{(\|f\|_p^p + \|g\|_p^p)^{1/p}}
            < 2^{1/p'}A_p.
        \]
        In other words, $\mathcal{E} + \mathcal{E}_{(\tau_0,\xi_0)}$ has no extremizers.
        \item If the sequence $(f_n,g_n) \subset L^p \times L^p$ extremizes
        $\mathcal{E} + \mathcal{E}_{(\tau_0,\xi_0)}$, then there exists a subsequence in $n$ along
        which
        \begin{equation}\label{sequence repn}
            f_n(\xi) = S_nf(\xi) + r_n(\xi) \quad \text{and} \quad g_n(\xi) = S_nf(\xi) + w_n(\xi)
        \end{equation}
        such that $f$ extremizes $\mathcal{E}$, $\{S_n\} \subset \mathbf{S}$ written in the form of
        \eqref{canonical symmetry S+} with parameters $\lambda, \widetilde{\xi}, t_0, x_0$ that satisfy
        \begin{enumerate}
            \item $\lambda_n \rightarrow \infty$,
            \item $\lambda_n^{-2}\xi_0 \cdot\widetilde{\xi}_n \rightarrow 0$, and
            \item $\lambda_n t_n \xi_0 \rightarrow 0$,
        \end{enumerate}
        and $\|r_n\|_p + \|w_n\|_p \rightarrow 0$.
    \end{enumerate}
\end{theorem}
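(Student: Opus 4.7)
My plan is to handle the three parts in sequence, with most of the effort going into part (3).

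\textbf{Parts (1) and (2).} The identity
\[
    \mathcal{E}_{(\tau_0, \xi_0)}g(t,x) = e^{it(|\xi_0|^2+\tau_0)}\,\mathcal{E}g(t, x - 2t\xi_0),
\]
obtained by completing the square in the phase $t|\xi - \xi_0|^2$, gives $\|\mathcal{E}_{(\tau_0,\xi_0)}g\|_q = \|\mathcal{E}g\|_q \le A_p\|g\|_p$. Combining with the triangle inequality in $L^q$ and the scalar bound $a + b \le 2^{1/p'}(a^p + b^p)^{1/p}$ yields the upper bound in (1). For sharpness I take an extremizer $f$ for $\mathcal{E}$ (Shao/Stovall) and a sequence $S_n \in \mathbf{S}$ chosen so that $\mathcal{E}(S_n f)$ concentrates on a spacetime region where the Galilean shift $2t\xi_0$ and the modulation $e^{it(|\xi_0|^2 + \tau_0)}$ are both negligible; setting $f_n = g_n = S_n f$ then forces $\mathcal{E}f_n + \mathcal{E}_{(\tau_0,\xi_0)}g_n \to 2\,\mathcal{E}(S_n f)$ in the relative $L^q$ sense, delivering ratio $\to 2^{1/p'}A_p$. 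For non-existence in (2), rigidity in the triangle inequality forces any extremizer to satisfy $\mathcal{E}f = c\,\mathcal{E}_{(\tau_0,\xi_0)}g$ with $c \ge 0$; the spacetime Fourier transforms of the two sides are measures supported on the distinct hypersurfaces $\{\tau=|\xi|^2\}$ and $\{\tau=|\xi-\xi_0|^2+\tau_0\}$, so both must vanish identically.

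\textbf{Part (3), setup.} Normalize $\|f_n\|_p^p + \|g_n\|_p^p = 1$. Extremization forces every intermediate inequality to become tight as $n\to\infty$: the Hölder step gives $\|f_n\|_p, \|g_n\|_p \to 2^{-1/p}$; the operator-norm step makes $f_n$ extremizing for $\mathcal{E}$ and $g_n$ extremizing for $\mathcal{E}_{(\tau_0,\xi_0)}$; and the triangle step, combined with uniform convexity of $L^q$ for $1<q<\infty$, yields $\|\mathcal{E}f_n - \mathcal{E}_{(\tau_0,\xi_0)}g_n\|_q \to 0$. By Stovall's precompactness applied to $f_n$, pass to a subsequence so that $f_n = S_n f + r_n$ with $f$ extremizing $\mathcal{E}$, $S_n \in \mathbf{S}$ in the canonical form \eqref{canonical symmetry S+} with parameters $(\lambda_n, \widetilde{\xi}_n, t_n, x_n)$, and $\|r_n\|_p \to 0$.

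\textbf{Part (3), core argument.} Applying the isometry $T_n^{-1}$ to the relation $\mathcal{E}f_n - \mathcal{E}_{(\tau_0,\xi_0)}g_n \to 0$ and iterating the commutation relations \eqref{translated symmetries} through the three generators of $\mathbf{T}$, the translated piece becomes $\mathcal{E}_{(\tau_n^\ast,\xi_n^\ast)}h_n$ for rescaled parameters $(\tau_n^\ast,\xi_n^\ast)$ computed from $(\lambda_n, \widetilde{\xi}_n, t_n)$ and a transported function $h_n$ carrying a residual phase generated by expanding $|\lambda_n\xi - \widetilde{\xi}_n - \xi_0|^2 - |\lambda_n\xi - \widetilde{\xi}_n|^2$ in the phase factor of \eqref{canonical symmetry T'}. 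The constant and $\xi$-linear components of that residual phase can be absorbed into the frequency and spacetime translation parameters on $h_n$; what resists absorption is an unabsorbable factor of the form $e^{-2it_n\lambda_n\xi\cdot\xi_0}$. A Fourier-support contradiction then pins down the parameter behavior: if any of $(\tau_n^\ast,\xi_n^\ast, t_n\lambda_n\xi_0)$ had a nonzero subsequential limit, then along a further subsequence the right-hand side would converge to the extension (or a nontrivially modulated extension) of an $L^p$ function whose spacetime Fourier spectrum is disjoint from that of $\mathcal{E}f$, forcing $f = 0$ and contradicting that $f$ is an extremizer. Reading these three vanishings through the explicit parameter formulas gives conditions (a), (b), and (c). With these conditions in force, the matching relation degenerates to $\mathcal{E}f = \mathcal{E}h + o(1)$ for $h = \lim h_n$, so $h = f$ (by Stovall, after absorbing a residual $\mathbf{S}$-factor into $S_n$), and untransporting yields $g_n = S_n f + w_n$ with $\|w_n\|_p \to 0$. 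The principal obstacle is the careful bookkeeping distinguishing the absorbable components of the residual phase from the non-absorbable remainder $t_n\lambda_n\xi_0$: it is precisely the latter that produces condition (c).
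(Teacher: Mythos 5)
Your parts (1) and (2) are fine, and your argument for (2) — equality in Minkowski forces $\mathcal{E}f = c\,\mathcal{E}_{(\tau_0,\xi_0)}g$, whose spacetime Fourier transforms live on distinct hypersurfaces — is a clean direct route that the paper does not take (the paper instead shows $f_n, g_n \rightharpoonup 0$ for any extremizing sequence and specializes to constant sequences). Part (3) follows the paper's overall strategy (transport by $T_n$, express $T_n\mathcal{E}_{(\tau_0,\xi_0)}g_n$ as $\mathcal{E}_{(\tau_n^*,\xi_n^*)}h_n$, use a Fourier-support argument to kill the translated parameters), but there is a genuine gap at the final step.

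The gap is the assertion that "the matching relation degenerates to $\mathcal{E}f = \mathcal{E}h + o(1)$ for $h = \lim h_n$, so $h = f$." Nothing you have established implies that $h_n$ (the transported $g_n$) converges in $L^p$: Stovall applied to $g_n$ would only give convergence after applying some other sequence of symmetries $S_n'$, a priori unrelated to the $S_n$ produced by applying Stovall to $f_n$, and your Fourier-support contradiction establishes $(\tau_n^*,\xi_n^*)\to 0$ using only boundedness of $\|h_n\|_p$ (your phrasing "the right-hand side would converge to the extension of an $L^p$ function" is misleading — no such limit is available and none is needed there), so it does not by itself produce the limit $h$. The missing step is exactly what the paper supplies: one must separately show that $g_n$ matches $f_n$ up to $o_{L^p}(1)$. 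The paper does this by pairing $\mathcal{E}f_n - \mathcal{E}_{(\tau_0,\xi_0)}g_n$ against $\widehat{\psi}$ with $\psi(\tau,\xi) = \phi(\xi)\eta(\tau - |\xi|^2)$, $\eta\equiv 1$ over both paraboloid graphs above $\supp\phi$, so that the pairing collapses to $\int(f_n - g_n)\phi$, giving $f_n - g_n \rightharpoonup 0$; combined with $\|f_n\|_p, \|g_n\|_p$ matching and uniform convexity, this yields strong $L^p$ convergence. (Equivalently, once $(\tau_n^*,\xi_n^*)\to 0$ is known, the same style of test shows $h_n \rightharpoonup f$, and Radon--Riesz with $\|h_n\|_p \to \|f\|_p$ upgrades this to strong convergence.) Without this step your condition (c) and the claim $\|w_n\|_p \to 0$ are unsupported, since the phase analysis leading to (c) requires a fixed limit profile $f$ on the $g_n$ side, not merely a bounded sequence.
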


We prove Theorem \ref{translated theorem} part 1 by considering a sequence of dilates of an
extremizer for $\mathcal{E}$. Next, we use uniform convexity to prove that for any extremizing
sequence $\{(f_n, g_n)\}$, $\|\mathcal{E}f_n - \mathcal{E}_{(\tau_0, \xi_0)}g_n\|_q \rightarrow
0$. Using the separation and transversality of the paraboloids, we also prove that
$f_n \rightharpoonup 0$ and $g_n \rightharpoonup 0$ and deduce Theorem \ref{translated theorem} part
2.  Finally, we use the results in \cite{Stovall} to find symmetries $\{S_n\}$ such that
$S_nf_n \rightarrow f$ in $L^p$. Theorem \ref{translated theorem} part 3 follows from a direct
computation.
	
\subsection*{Acknowledgements}
	
This project was suggested and overseen by Betsy Stovall and supported in part by NSF
DMS-1653264. The author would like to thank her for many helpful conversations and invaluable
guidance in the writing of this paper.
	
\section{Operator Norm}

First, we prove boundedness.
\begin{proposition}\label{general estimate}
    For $f,g \in L^p(\R^d)$,
    \begin{equation}\label{upper bound}
        \left\|\mathcal{E}f + \mathcal{E}_{(\tau_0, \xi_0)}g\right\|_q \leq 2^{1/p'}A_p\left(\left\|f\right\|_p^p + \left\|g\right\|_p^p\right)^{1/p}.
    \end{equation}
\end{proposition}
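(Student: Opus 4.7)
My plan is to reduce the estimate to the known adjoint restriction bound for the single paraboloid. The key observation is that $\mathcal{E}_{(\tau_0,\xi_0)}$ differs from $\mathcal{E}$ only by a frequency translation and a unimodular phase: substituting $\eta = \xi - \xi_0$ in the defining integral gives
\[
    \mathcal{E}_{(\tau_0,\xi_0)}g(t,x) = e^{i(t\tau_0 + x\cdot\xi_0)}\,\mathcal{E}\tilde{g}(t,x),
\]
where $\tilde{g}(\eta) := g(\eta + \xi_0)$. In particular $|\mathcal{E}_{(\tau_0,\xi_0)}g| = |\mathcal{E}\tilde{g}|$ pointwise, so by the hypothesis $A_p < \infty$ and translation invariance of Lebesgue measure,
\[
    \|\mathcal{E}_{(\tau_0,\xi_0)}g\|_q = \|\mathcal{E}\tilde{g}\|_q \leq A_p\|\tilde{g}\|_p = A_p\|g\|_p.
\]

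Next I would apply the triangle inequality in $L^q$ together with the analogous bound for $\mathcal{E}f$ to obtain
\[
    \|\mathcal{E}f + \mathcal{E}_{(\tau_0,\xi_0)}g\|_q \leq \|\mathcal{E}f\|_q + \|\mathcal{E}_{(\tau_0,\xi_0)}g\|_q \leq A_p\bigl(\|f\|_p + \|g\|_p\bigr).
\]

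The final step is the elementary $\ell^1 \to \ell^p$ inequality on $\R^2$: for nonnegative $a,b$ and $p \geq 1$, Hölder's inequality applied to $\langle(a,b),(1,1)\rangle$ yields $a + b \leq 2^{1/p'}(a^p + b^p)^{1/p}$. Taking $a = \|f\|_p$ and $b = \|g\|_p$ gives exactly \eqref{upper bound}. There is no real obstacle; the genuine content of Theorem \ref{translated theorem} lies in showing that $2^{1/p'}A_p$ is actually attained as a supremum (part 1) and that strict inequality nonetheless holds for every admissible pair (part 2), neither of which is addressed by Proposition \ref{general estimate} itself. In particular, the sharp constant being $2^{1/p'}A_p$ rather than $2A_p$ will be a consequence of the asymptotic orthogonality of $\mathcal{E}f$ and $\mathcal{E}_{(\tau_0,\xi_0)}g$ under appropriate sequences of symmetries, a theme that must be developed after this preliminary bound.
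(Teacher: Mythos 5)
Your proof is correct and follows essentially the same three steps as the paper's: identify $\mathcal{E}_{(\tau_0,\xi_0)}g$ with a modulated translate of $\mathcal{E}$, apply the triangle inequality together with the single-paraboloid bound, and finish with the two-point H\"older inequality $a+b \leq 2^{1/p'}(a^p+b^p)^{1/p}$. Your closing remark about where the real content of Theorem \ref{translated theorem} lies is also accurate.
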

	
\begin{proof}
    Since
    $\|\mathcal{E}_{(\tau_0, \xi_0)}g(t,x)\|_q = \|e^{i(t,x)\cdot(\tau_0, \xi_0)}\mathcal{E}[g(\cdot
    + \xi_0)](t,x)\|_q \leq A_p\|g\|_p$,
    \begin{equation}\label{upper bound comp}
        \begin{split}
            \left\|\mathcal{E}f + \mathcal{E}_{(\tau_0, \xi_0)}g\right\|_q &= \left\|\mathcal{E}f + \mathcal{E}_{(\tau_0, \xi_0)}g\right\|_q \leq \left\|\mathcal{E}f\right\|_q + \left\| \mathcal{E}_{(\tau_0, \xi_0)}g \right\|_q \\
            &\leq A_p\left( \|f\|_p + \|g\|_p\right) \leq 2^{1/p'}A_p \left( \|f\|_p^p +
                \|g\|_p^p\right)^{1/p}.
        \end{split}
    \end{equation}
\end{proof}

Next, we construct an extremizing sequence for
$\mathcal{E} + \mathcal{E}_{(\tau_0, \xi_0)}: L^p \times L^p \rightarrow L^q$ to show that
$2^{1/p'}A_p$ is the sharp constant.

\begin{proposition}\label{existence}
    Let $f \in L^p$ be such that $\|\mathcal{E}f\|_q = A_p\|f\|_p$ and let
    $f_\lambda(\xi) := \lambda^{d/p}f(\lambda\xi)$. Then
    \[
        \lim_{\lambda \rightarrow 0} \frac{\|\mathcal{E}f_\lambda +
          \mathcal{E}_{(\tau_0,\xi_0)}f_\lambda\|_q}{2^{1/p}\|f\|_p} = 2^{1/p'}A_p.
    \]
\end{proposition}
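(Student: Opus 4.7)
The plan is to exploit the scaling symmetry of $\mathcal{E}$ to replace $\mathcal{E}f_\lambda + \mathcal{E}_{(\tau_0,\xi_0)}f_\lambda$ by a sum of the form $H + H_\lambda$, where $H := \mathcal{E}f$ is fixed and $H_\lambda$ is a phase-modulation and translation of $H$ that converges to $H$ in $L^q$ as $\lambda \to 0$. From the scaling symmetry one gets $\mathcal{E}f_\lambda(t,x) = \lambda^{-d/p'}H(\lambda^{-2}t,\lambda^{-1}x)$, and completing the square in the phase of $\mathcal{E}_{(\tau_0,\xi_0)}f_\lambda$ gives
\[
    \mathcal{E}_{(\tau_0,\xi_0)}f_\lambda(t,x) = \lambda^{-d/p'}e^{it(|\xi_0|^2+\tau_0)}H(\lambda^{-2}t,\lambda^{-1}x-2\lambda^{-1}t\xi_0).
\]
The change of variables $(T,X) = (\lambda^{-2}t,\lambda^{-1}x)$ contributes a Jacobian factor $\lambda^{d+2}$ that cancels $\lambda^{-qd/p'}$ via $qd/p' = d+2$, yielding
\[
    \|\mathcal{E}f_\lambda + \mathcal{E}_{(\tau_0,\xi_0)}f_\lambda\|_q^q = \int |H(T,X) + H_\lambda(T,X)|^q\,dT\,dX,
\]
with $H_\lambda(T,X) := e^{i\lambda^2 Tc}H(T,X-2\lambda T\xi_0)$ and $c := |\xi_0|^2+\tau_0$.

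Since $\|H_\lambda\|_q = \|H\|_q = A_p\|f\|_p$ (translation in $X$ is an isometry for each $T$, and the modulation is unitary), the triangle and reverse triangle inequalities reduce everything to proving $\|H_\lambda - H\|_q \to 0$. I would split
\[
    H_\lambda - H = e^{i\lambda^2 Tc}\bigl[H(T,X-2\lambda T\xi_0) - H(T,X)\bigr] + (e^{i\lambda^2 Tc}-1)H(T,X);
\]
the second term goes to $0$ in $L^q$ by dominated convergence with dominating function $2|H|$.

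For the first term, Fubini rewrites its $L^q$ norm to the $q$-th power as $\int \|H(T,\cdot-2\lambda T\xi_0)-H(T,\cdot)\|_{L^q_X}^q\,dT$. For a.e.\ $T$, $H(T,\cdot) \in L^q(\R^d)$, so continuity of translation in $L^q$ gives pointwise convergence to $0$ in $T$; the integrand is dominated by $2^q\|H(T,\cdot)\|_{L^q_X}^q$, which lies in $L^1_T$ because $H \in L^q(\R^{d+1})$, and a second application of dominated convergence concludes.

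The main subtlety is that the translation parameter $2\lambda T\xi_0$ depends on the variable $T$, so no single global translation-continuity appeal suffices; the nested Fubini-plus-DCT step above is the natural workaround. Once $\|H_\lambda - H\|_q \to 0$ is in hand, $\|H + H_\lambda\|_q \to 2\|H\|_q = 2A_p\|f\|_p$, and dividing by the denominator $2^{1/p}\|f\|_p = (\|f_\lambda\|_p^p + \|f_\lambda\|_p^p)^{1/p}$ produces the claimed limit $2 \cdot 2^{-1/p}A_p = 2^{1/p'}A_p$.
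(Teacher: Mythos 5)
Your proof is correct and follows essentially the same route as the paper: exploit the scaling symmetry to reduce to the claim that $e^{i\lambda^2tc}\mathcal{E}f(t,x-2\lambda t\xi_0)\to\mathcal{E}f$ in $L^q$ as $\lambda\to 0$, and then pass that limit through using the fact that $\|\mathcal{E}f\|_q=A_p\|f\|_p$. The one (minor) technical difference is in the last step: the paper approximates $\mathcal{E}f$ by $C^\infty_{cpct}$ functions and applies dominated convergence, whereas you split off the phase factor, apply Fubini, and invoke continuity of translation in $L^q_X$ for a.e.\ slice $H(T,\cdot)$ followed by dominated convergence in $T$; both are standard ways to handle the $T$-dependent translation, and your nested Fubini--DCT argument is correct and self-contained.
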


\begin{proof}
    Let $f \in L^p$ be such that $\|f\|_p = 1$ and $\|\mathcal{E}f\|_q = A_p$. By the scaling
    properties of $\mathcal{E}$ we know that $\|f_\lambda\|_p = \|f\|_p$ and
    $\|\mathcal{E}f_\lambda\|_q = \|\mathcal{E}f\|_q$ for all $\lambda > 0$. The identity
    \begin{equation*}
        \mathcal{E}_{(\tau_0, \xi_0)} f_\lambda (t, x) = \lambda^{-(d+2)/q}\mathcal{E}_{(\lambda^2\tau_0, \lambda\xi)}f(\lambda^{-2}t, \lambda^{-1}x).
    \end{equation*}
    implies
    $\left\|\mathcal{E}_{(\tau_0, \xi_0)}f_\lambda + \mathcal{E}f_\lambda\right\|_q =
    \left\|\mathcal{E}_{(\lambda^2\tau_0, \lambda\xi_0)}f + \mathcal{E}f\right\|_q$. Moreover,
    \begin{equation*}
        \mathcal{E}_{(\lambda^2\tau_0, \lambda\xi_0)}f(t,x) = e^{it\lambda^2(|\xi_0|^2 + \tau_0)} \mathcal{E}f(t, x - 2\lambda t\xi_0).
    \end{equation*}
    
    By approximating $\mathcal{E}f$ in $C^\infty_{cpct}$ and applying dominated convergence, it is
    clear that $\|\mathcal{E}_{(\lambda^2\tau_0, \lambda\xi_0)}f - \mathcal{E}f\|_q \rightarrow
    0$. Since $f$ is an extremizer, this implies that
    $\|\mathcal{E}_{(\tau_0, \xi_0)}f + \mathcal{E}f\|_q \rightarrow 2A_p$.
\end{proof}

\begin{proof}[Proof of Theorem \ref{translated theorem} part 1]
    This follows from Proposition \ref{existence} and Proposition \ref{general estimate}.
\end{proof}

\section{Non-Existence of Extremizers}

Now we use the operator norm to understand extremizing sequences. We begin by proving that
extremizing sequences of functions must converge weakly to zero.

\begin{proposition}\label{weak limit}
    Assume that $|\tau_0| + |\xi_0| > 0$. Then for every bounded extremizing sequence $\{(f_n, g_n)\}$
    for $\mathcal{E} + \mathcal{E}_{(\tau_0, \xi_0)}$,
    \begin{enumerate}
        \item $\{f_n\}$ extremizes $\mathcal{E}$ and $\{g_n\}$ extremizes
        $\mathcal{E}_{(\tau_0, \xi_0)}$;
        \item $\|f_n\|_p - \|g_n\|_p \rightarrow 0$ as $n \rightarrow \infty$;
        \item $\|\mathcal{E}f_n - \mathcal{E}_{(\tau_0, \xi_0)}g_n\|_q \rightarrow 0$ as
        $n \rightarrow \infty$; and
        \item $f_n, g_n \rightharpoonup 0$ weakly in $L^p$.
    \end{enumerate}
    
\end{proposition}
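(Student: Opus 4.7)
My plan is to prove the four parts in sequence, each building on the previous. Normalize $\|f_n\|_p^p + \|g_n\|_p^p = 1$; the extremizing hypothesis becomes $\|\mathcal{E}f_n + \mathcal{E}_{(\tau_0,\xi_0)}g_n\|_q \to 2^{1/p'}A_p$. For parts~1 and~2, I would revisit the chain of inequalities from the proof of Proposition~\ref{general estimate}:
\begin{equation*}
\|\mathcal{E}f_n + \mathcal{E}_{(\tau_0,\xi_0)}g_n\|_q \le \|\mathcal{E}f_n\|_q + \|\mathcal{E}_{(\tau_0,\xi_0)}g_n\|_q \le A_p(\|f_n\|_p + \|g_n\|_p) \le 2^{1/p'}A_p.
\end{equation*}
Since the leftmost quantity tends to the rightmost, each intermediate deficit is nonnegative and must vanish in the limit. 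The outermost (power-mean) inequality is strict unless $\|f_n\|_p = \|g_n\|_p$, so a quantitative convexity estimate on $t \mapsto t^p$ yields part~2. Tightness of the middle step, together with the termwise bounds $\|\mathcal{E}f_n\|_q \le A_p\|f_n\|_p$ and $\|\mathcal{E}_{(\tau_0,\xi_0)}g_n\|_q \le A_p\|g_n\|_p$, forces each bound to be asymptotically sharp, giving part~1.

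For part~3 I would invoke the uniform convexity of $L^q$, valid since $1 < q < \infty$. Parts~1 and~2 yield $\|\mathcal{E}f_n\|_q, \|\mathcal{E}_{(\tau_0,\xi_0)}g_n\|_q \to c := A_p\, 2^{-1/p}$ while $\|\mathcal{E}f_n + \mathcal{E}_{(\tau_0,\xi_0)}g_n\|_q \to 2c$, precisely the asymptotic equality case of the triangle inequality. Setting $u_n := \mathcal{E}f_n/\|\mathcal{E}f_n\|_q$ and $v_n := \mathcal{E}_{(\tau_0,\xi_0)}g_n/\|\mathcal{E}_{(\tau_0,\xi_0)}g_n\|_q$, a short bound $a\|u_n + v_n\|_q \ge \|\mathcal{E}f_n + \mathcal{E}_{(\tau_0,\xi_0)}g_n\|_q - |a-b|$ (with $a = \|\mathcal{E}f_n\|_q$, $b = \|\mathcal{E}_{(\tau_0,\xi_0)}g_n\|_q$) gives $\|u_n + v_n\|_q \to 2$, and uniform convexity yields $\|u_n - v_n\|_q \to 0$. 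Rescaling back, using that the norm gap tends to $0$, produces part~3.

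For part~4 I would combine part~3 with the transversality of the two paraboloids. By boundedness and reflexivity of $L^p$, every subsequence of $(f_n, g_n)$ contains a further subsequence with $f_{n_k} \rightharpoonup f$ and $g_{n_k} \rightharpoonup g$ weakly; continuity of the bounded linear operators $\mathcal{E}$ and $\mathcal{E}_{(\tau_0,\xi_0)}$ transports this weak convergence to $L^q$, so by part~3 and uniqueness of weak limits $\mathcal{E}f = \mathcal{E}_{(\tau_0,\xi_0)}g$ almost everywhere. Taking distributional Fourier transforms in $(t,x)$ gives $f(\xi)\delta(\tau - |\xi|^2) = g(\xi)\delta(\tau - |\xi-\xi_0|^2 - \tau_0)$ as tempered distributions on $\R^{d+1}$. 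Multiplying both sides by the smooth function $\tau - |\xi - \xi_0|^2 - \tau_0$ annihilates the right side and reduces to $f(\xi)(2\xi \cdot \xi_0 - |\xi_0|^2 - \tau_0) = 0$ almost everywhere; since $(\tau_0, \xi_0) \ne 0$, this affine coefficient vanishes only on a Lebesgue-null set, so $f = 0$. Symmetrically $g = 0$. As every subsequence admits a further sub-subsequence tending weakly to zero, $f_n, g_n \rightharpoonup 0$. The main obstacle is the rigorous manipulation of the distributional Fourier identity; an alternative avoiding it is to observe that $u := \mathcal{E}f = \mathcal{E}_{(\tau_0,\xi_0)}g$ solves both the free and a translated Schr\"odinger equation, so subtracting them shows the first-order operator $2i\xi_0 \cdot \nabla_x + |\xi_0|^2 + \tau_0$ annihilates $u$, again forcing $f$ to be supported on a hyperplane and hence vanish.
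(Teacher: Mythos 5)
Your parts 1--3 track the paper's argument essentially verbatim: read the chain of inequalities from Proposition~\ref{general estimate}, force each step to be asymptotically tight, extract condition 2 from strict convexity/sharp H\"older, condition 1 from tightness of the middle inequality, and condition 3 from uniform convexity of $L^q$. For part 4 you take a genuinely different route. The paper works directly with the sequence: it tests $\mathcal{E}f_n - \mathcal{E}_{(\tau_0,\xi_0)}g_n \to 0$ against functions $\phi$ whose Fourier transform is supported in a small ball $B((|\eta|^2,\eta),r)$ disjoint from $P_{(\tau_0,\xi_0)}$, so that $\langle \mathcal{E}_{(\tau_0,\xi_0)}g_n,\phi\rangle = 0$ identically and the limit forces $\int f_n\,\psi(|\xi|^2,\xi)\,d\xi\to 0$; such $\psi(|\cdot|^2,\cdot)$ span densely away from the null intersection hyperplane, giving $f_n\rightharpoonup 0$. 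You instead pass to weak subsequential limits $f,g$, use weak-to-weak continuity of $\mathcal{E}$ and $\mathcal{E}_{(\tau_0,\xi_0)}$ together with condition 3 to get $\mathcal{E}f=\mathcal{E}_{(\tau_0,\xi_0)}g$, then multiply the distributional identity $f\,d\sigma = g\,d\sigma'$ by the polynomial $\tau-|\xi-\xi_0|^2-\tau_0$ to annihilate the right side and conclude $f(\xi)\bigl(2\xi\cdot\xi_0 - |\xi_0|^2-\tau_0\bigr)=0$ a.e., whence $f=0$ (and symmetrically $g=0$), finishing by the subsequence principle. Both proofs exploit the same geometric input --- the two paraboloids agree only on a measure-zero set because $(\tau_0,\xi_0)\neq 0$ --- but your version localizes to the weak limit and reduces everything to one clean algebraic step, at the cost of the extra reflexivity/subsequence bookkeeping; the paper's version avoids passing to a limit object at all, working density-of-test-functions on the sequence directly. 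Both are correct; the distributional multiplication is legitimate since the multiplier is a polynomial and $f\,d\sigma$, $g\,d\sigma'$ are tempered distributions.
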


\begin{proof}
    Let $\{(f_n, g_n)\} \subset \ell^p(L^p \times L^p)$ such that
    $(\|f_n\|_p^p + \|g_n\|_p^p)^{1/p} = 1$ for all $n$ and
    \begin{equation*}
        \lim_{n\rightarrow \infty} \|\mathcal{E}f_n + \mathcal{E}_{(\tau_0,\xi_0)}g_n\|_q = 2^{1/p'}A_p.
    \end{equation*}
    By \eqref{upper bound comp}, we must have
    \begin{equation}\label{first}
        \lim_{n \rightarrow \infty}\frac{\left\|\mathcal{E}f_n + \mathcal{E}_{(\tau_0, \xi_0)}g_n\right\|_q}{\left\|\mathcal{E}f_n\right\|_q + \left\| \mathcal{E}_{(\tau_0, \xi_0)}g_n \right\|_q} = 1;
    \end{equation}
    \begin{equation}\label{second}
        \lim_{n \rightarrow \infty}\frac{\left\|\mathcal{E}f_n\right\|_q + \left\| \mathcal{E}_{(\tau_0, \xi_0)}g_n \right\|_q}{A_p\left( \|f_n\|_p + \|g_n\|_p\right)} = 1;
    \end{equation}
    and
    \begin{equation}\label{third}
        \lim_{n \rightarrow \infty}\frac{A_p\left( \|f_n\|_p + \|g_n\|_p\right)}{2^{1/p'}A_p \left( \|f_n\|_p^p + \|g_n\|_p^p\right)^{1/p}} = 1.
    \end{equation}
    
    By \eqref{third} and the sharp H\"older inequality, $\|f_n\|_p, \|g_n\|_p \rightarrow 2^{-1/p}$
    proving condition 2. Combining this with \eqref{second} implies condition 1 and, in particular,
    $\|\mathcal{E}f_n\|_q, \|\mathcal{E}_{(\tau_0,\xi_0)}g_n\|_q \rightarrow 2^{-1/p}A_p$. In light
    of \eqref{first}, we see that
    $\|\mathcal{E}f_n + \mathcal{E}_{(\tau_0,\xi_0)}g_n\|_q - \|\mathcal{E}f_n\|_q -
    \|\mathcal{E}_{(\tau_0,\xi_0)}g_n\|_q \rightarrow 0$. Since $L^q$ is uniformly convex
    (\cite[Theorem 2.5]{LiebLoss}), this proves condition 3.
    
    Turning to condition 4, let $\psi = \overline{\widehat{\phi}} \in \mathcal{S}$ and let
    $d\sigma(\tau, \xi)$ be the measure on the paraboloid centered at the origin given by the
    pullback of Lebesgue measure via the projection map $\R \times \R^d \rightarrow \R^d$. By
    definition, $\mathcal{E}f = \widehat{fd\sigma}$ and since $L^q \subset \mathcal{S}'$, we may
    compute
    \begin{equation*}
        \langle \mathcal{E}f_n, \phi\rangle = \langle f_nd\sigma, \widehat{\phi}\rangle = \int f_n(\xi) \psi(|\xi|^2, \xi)d\xi.
    \end{equation*}
    In the same way, let $d\sigma'(\tau, \xi)$ be the measure on the translated paraboloid so that
    \begin{equation*}
        \langle \mathcal{E}_{(\tau_0, \xi_0)} g_n, \phi\rangle = \langle g_nd\sigma', \widehat{\phi}\rangle = \int g_n(\xi)\psi(|\xi - \xi_0|^2 + \tau_0, \xi)d\xi.
    \end{equation*}
    
    Let $\eta \in \R^d$. As long as $(|\eta|^2, \eta) \in P$ isn't on the intersection of the two
    paraboloids, there exists $r > 0$ sufficiently small that $B((|\eta|^2, \eta),r)$ is disjoint
    from $P_{(\tau_0,\xi_0)}$. Let $\psi: \R^{d+1} \rightarrow \R$ be any smooth function supported
    on $B((|\eta|^2, \eta), r)$. Since $\psi(|\xi - \xi_0|^2 + \tau_0, \xi) = 0$ for all
    $\xi \in \R^d$, $\langle\mathcal{E}_{(\tau_0,\xi_0)}g_n, \phi\rangle = 0$ for all $n$ so we can
    apply condition 3 to see that
    \begin{equation*}
        \int f_n(\xi) \psi(|\xi|^2, \xi)d\xi \rightarrow 0.
    \end{equation*}
    Since $\psi(|\cdot|^2, \cdot)$ ranges over all smooth functions supported on $B(\eta, r)$,
    $f_n \rightharpoonup 0$ weakly on a neighborhood of almost every point. Hence
    $f_n \rightharpoonup 0$ weakly on all of $\R^d$ and the statement for $g_n$ follows
    similarly. This proves condition 4.
\end{proof}

\begin{proof}[Proof of Theorem \ref{translated theorem} part 2]
    An extremizing pair $(f, g)$ is a constant extremizing sequence. Therefore non-zero extremizers
    do not exist.
\end{proof}

\section{Characterization of Extremizing Sequences}

Let $\{(f_n, g_n)\} \subset \ell^p(L^p \times L^p)$ be an extremizing sequence for
$\mathcal{E} + \mathcal{E}_{(\tau_0, \xi_0)}$ such that $\|f_n\|_p^p + \|g_n\|_p^p = 1$ for all
$n$. By Proposition \ref{weak limit}, $\{f_n\}$ is an extremizing sequence for $\mathcal{E}$ so by
\cite[Theorem 1.1]{Stovall} there exist $\{S_n\} \subset \mathbf{S}$ such that
$S_n f_n \rightarrow f$ in $L^p$ along some subsequence where $f$ is an extremizer for
$\mathcal{E}$. Let $\{T_n\} \subset \mathbf{T}_+$ be such that
$T_n \circ \mathcal{E} = \mathcal{E} \circ S_n$ for all $n$. Then
\begin{multline*}
    2^{1/p'}A_p = \lim_{n\rightarrow\infty} \|\mathcal{E}f_n + \mathcal{E}_{(\tau_0, \xi_0)}g_n\|_q \\
    = \lim_{n\rightarrow\infty} \|T_n\mathcal{E}f_n + T_n\mathcal{E}_{(\tau_0, \xi_0)}g_n\|_q =
    \lim_{n\rightarrow\infty} \|\mathcal{E}f + T_n\mathcal{E}_{(\tau_0, \xi_0)}g_n\|_q.
\end{multline*}

Since $\|\mathcal{E}f\|_q = A_p\|f\|_p$ and $T_n$ are $L^q$ symmetries, uniform convexity implies
that $T_n\mathcal{E}_{(\tau_0, \xi_0)}g_n \rightarrow \mathcal{E}f$ in $L^q$. From here we can
deduce the behavior of $S_n g_n$.

\begin{proposition}\label{translated fourier support condition}
    Let $f \in L^p(\R^d)$ such that $\|f\|_p = 1$ and $\{(\tau_n, \xi_n)\} \subset \R^{d+1}$. If
    $\lim \tau_n = \tau_0$ and $\lim \xi_n = \xi_0$, then
    \[
        \lim \|\mathcal{E}_{(\tau_0,\xi_0)}f - \mathcal{E}_{(\tau_n,\xi_n)}f\|_q = 0.
    \]
    Conversely, if $f \in L^p(\R^d)$ and $\{g_n\} \subset L^p(\R^d)$ are such that
    $\|f\|_p = \|g_n\|_p = 1$ for all $n$ and
    \[
        \lim \|\mathcal{E}_{(\tau_0,\xi_0)}f - \mathcal{E}_{(\tau_n,\xi_n)}g_n\|_q = 0,
    \]
    then $\lim \tau_n = \tau_0$ and $\lim \xi_n = \xi_0$.
\end{proposition}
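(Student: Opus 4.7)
The forward direction is direct. Using the identity $\mathcal{E}_{(\tau,\xi)}f(t,x) = e^{i(t\tau+x\cdot\xi)}\mathcal{E}[f(\cdot+\xi)](t,x)$ (cf.\ the proof of Proposition \ref{existence}), split
\begin{multline*}
\mathcal{E}_{(\tau_n,\xi_n)}f - \mathcal{E}_{(\tau_0,\xi_0)}f = e^{i(t\tau_n+x\cdot\xi_n)}\bigl(\mathcal{E}[f(\cdot+\xi_n)] - \mathcal{E}[f(\cdot+\xi_0)]\bigr) \\
+ \bigl(e^{i(t\tau_n+x\cdot\xi_n)} - e^{i(t\tau_0+x\cdot\xi_0)}\bigr)\mathcal{E}[f(\cdot+\xi_0)].
\end{multline*}
The first term has $L^q$-norm at most $A_p\|f(\cdot+\xi_n)-f(\cdot+\xi_0)\|_p \to 0$ by continuity of translation in $L^p$. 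The integrand of the second is dominated by $2|\mathcal{E}[f(\cdot+\xi_0)]|\in L^q$ and vanishes pointwise, so its $L^q$-norm tends to zero by dominated convergence.

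For the converse I argue by contradiction: passing to a subsequence, either (a) $(\tau_n,\xi_n)\to(\tau_*,\xi_*)\neq(\tau_0,\xi_0)$, or (b) $|\tau_n|+|\xi_n|\to\infty$. For any $\psi\in C_c^\infty(\R^{d+1})$, the inverse Fourier transform of $\overline{\psi}$ is Schwartz by Paley--Wiener and hence lies in $L^{q'}$; by the computation used in the proof of Proposition \ref{weak limit}, $L^q$-convergence of $\mathcal{E}_{(\tau_n,\xi_n)}g_n$ to $\mathcal{E}_{(\tau_0,\xi_0)}f$ gives
\[
\int g_n(\eta)\,\psi(|\eta-\xi_n|^2+\tau_n,\eta)\,d\eta \longrightarrow \int f(\eta)\,\psi(|\eta-\xi_0|^2+\tau_0,\eta)\,d\eta.
\]
I will pick $\psi$ making the right-hand side nonzero and the left-hand side tend to zero. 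In case (a), the set $\{\eta:|\eta-\xi_0|^2+\tau_0=|\eta-\xi_*|^2+\tau_*\}$ is at most an affine hyperplane, so there exists $\eta_0\in\supp f$ outside it. A sufficiently small ball $B_r$ around $(|\eta_0-\xi_0|^2+\tau_0,\eta_0)$ is then disjoint from $P_{(\tau_*,\xi_*)}$ and eventually from $P_{(\tau_n,\xi_n)}$. Choosing $\psi(\tau,\eta)=\theta(\tau)\chi(\eta)\in C_c^\infty(B_r)$ with $\theta\equiv 1$ near $|\eta_0-\xi_0|^2+\tau_0$ and $\chi$ supported near $\eta_0$ with $\int f\chi\neq 0$ (possible since $\eta_0\in\supp f$) makes the left side vanish eventually while the right equals $\int f\chi\neq 0$.

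The main obstacle is case (b), since $P_{(\tau_n,\xi_n)}$ need not escape $\supp\psi$: when $-\tau_n\sim|\xi_n|^2$ the paraboloid can graze any fixed bounded region. Instead, fix $\psi\in C_c^\infty([-M,M]\times B(0,M))$ with nonzero right-hand limit and study the support of the left integrand,
\[
E_n := \{\eta\in B(0,M) : |\eta-\xi_n|^2+\tau_n\in[-M,M]\}.
\]
I claim $|E_n|\to 0$. If $\xi_n$ stays bounded, $|\tau_n|\to\infty$ forces $E_n=\emptyset$ eventually. If $|\xi_n|\to\infty$, expanding $|\eta-\xi_n|^2=|\eta|^2-2\eta\cdot\xi_n+|\xi_n|^2$ shows that $\eta\in E_n$ forces $\eta\cdot\xi_n/|\xi_n|$ into an interval $I_n$ of length $O(|\xi_n|^{-1})$ (else $E_n$ is empty), so $E_n$ lies in a slab of that width in $B(0,M)$ and $|E_n|=O(|\xi_n|^{-1})\to 0$. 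Hölder's inequality then bounds the left integral by $\|\psi\|_\infty\|g_n\|_p|E_n|^{1/p'}\to 0$, contradicting the nonvanishing right-hand side.
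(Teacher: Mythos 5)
Your proof is correct, and it departs from the paper's argument in both directions in ways worth noting. For the forward implication, the paper approximates $f$ by a compactly supported smooth function and then controls $\mathcal{E}_{(\tau_0,\xi_0)}g-\mathcal{E}_{(\tau_n,\xi_n)}g$ on a large ball via Minkowski's inequality and a mean-value estimate on the oscillatory factor; your route instead conjugates out the translation, reducing one piece to $\|f(\cdot+\xi_n)-f(\cdot+\xi_0)\|_p\to 0$ (continuity of translation in $L^p$) and the other to a single dominated-convergence step, which is shorter and avoids the compact-support approximation entirely. For the converse, the paper works with the signed height function $h_n$ and builds a single test function $\Psi$ by excising a $2s_0$-neighborhood of the intersection hyperplane $A_n$ and cutting off in $\tau$ at scale $c_{s_0,R}$; the crux is the uniform lower bound \eqref{separation ineq} on $|h_n|$ off that neighborhood, which subsumes both the ``converges elsewhere'' and ``escapes to infinity'' behaviors. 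You instead split into two genuinely different regimes: for $(\tau_n,\xi_n)\to(\tau_*,\xi_*)\neq(\tau_0,\xi_0)$ you localize a test function away from the (at most affine-hyperplane) intersection set exactly as the paper does in spirit, while for $|\tau_n|+|\xi_n|\to\infty$ you bypass the hyperplane machinery completely with the measure estimate $|E_n|\to 0$ — either $E_n$ is eventually empty ($\xi_n$ bounded, $|\tau_n|\to\infty$) or lies in a slab of width $O(|\xi_n|^{-1})$ perpendicular to $\xi_n$ — and conclude by Hölder using $\|g_n\|_p=1$ and $p'<\infty$. The tradeoff: the paper's construction is more uniform (one $\Psi$ built from $c_{s_0,R}$ handles everything), while your case split is more elementary and makes the geometric mechanism in the unbounded case transparent; the two cases do cover all possibilities after passing to subsequences, and the contradiction along a subsequence suffices since the hypothesis forces $\|\mathcal{E}_{(\tau_0,\xi_0)}f-\mathcal{E}_{(\tau_n,\xi_n)}g_n\|_q\to 0$ along every subsequence.
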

\begin{proof}
    Assume that $\lim \tau_n = \tau_0$ and $\lim \xi_n = \xi_0$. First, we rearrange the expression:
    \begin{align*}
      &\|\mathcal{E}_{(\tau_0,\xi_0)}f - \mathcal{E}_{(\tau_n, \xi_n)}f\|_q \\
      &= \left\|\int \left[e^{i(t,x)\cdot(|\xi - \xi_0|^2 + \tau_0,\xi)} - e^{i(t,x)\cdot(|\xi - \xi_n|^2 + \tau_n, \xi)}\right]f(\xi)d\xi \right\|_q \\
      &= \left\|\int \left[e^{it(-2\xi\cdot\xi_0 + |\xi_0|^2 + \tau_0)} - e^{it(-2\xi\cdot\xi_n + |\xi_n|^2 + \tau_n)}\right]e^{i(t,x)\cdot(|\xi|^2,\xi)}f(\xi)d\xi\right\|_q \\
      &= \Bigg\|\int e^{it(-2\xi\cdot\xi_0 + |\xi_0|^2 + \tau_0)} \\
      &\qquad\left[1 - e^{it(-2\xi\cdot(\xi_n-\xi_0) + |\xi_n|^2 - |\xi_0|^2 + \tau_n - \tau_0)}\right]e^{i(t,x)\cdot(|\xi|^2,\xi)}f(\xi)d\xi\Bigg\|_q
    \end{align*}
    Let $\varepsilon > 0$. Take $g \in C^\infty_{cpct}(\R^d)$ such that
    $\|f - g\|_p = O(\varepsilon)$. Let $R>0$ be large enough that $\supp g \subset B(0, R)$ and
    \[
        \Big|\|\mathcal{E}_{(\tau_0,\xi_0)}g - \mathcal{E}_{(\tau_n, \xi_n)}g\|_{L^q(\R^{d+1})} -
        \|\mathcal{E}_{(\tau_0,\xi_0)}g - \mathcal{E}_{(\tau_n, \xi_n)}g\|_{L^q(B(0,R))} \Big| =
        O(\varepsilon).
    \]
    Then by Minkowski's integral inequality and H\"older,
    \begin{align*}
      &\|\mathcal{E}_{(\tau_0,\xi_0)}f - \mathcal{E}_{(\tau_n, \xi_n)}f\|_{L^q(\R^{d+1})} \\
      &= \|\mathcal{E}_{(\tau_0,\xi_0)}g - \mathcal{E}_{(\tau_n, \xi_n)}g\|_{L^q(\R^{d+1})} + O(\varepsilon) \\
      &= \|\mathcal{E}_{(\tau_0,\xi_0)}g - \mathcal{E}_{(\tau_n, \xi_n)}g\|_{L^q(B(0,R))} + O(\varepsilon) \\
      &= \Bigg\|\int e^{it(-2\xi\cdot\xi_0 + |\xi_0|^2 + \tau_0)} \\
      &\left[1 - e^{it(-2\xi\cdot(\xi_n-\xi_0) + |\xi_n|^2 - |\xi_0|^2 + \tau_n - \tau_0)}\right]e^{i(t,x)\cdot(|\xi|^2,\xi)}g(\xi)d\xi\Bigg\|_{L^q(B(0,R))} + O(\varepsilon) \\
      &\leq \|g\|_1 \sup_{|\xi|<R}\|1 - e^{it(-2\xi\cdot(\xi_n-\xi_0) + |\xi_n|^2 - |\xi_0|^2 + \tau_n - \tau_0)}\|_{L^q(B(0,R))} + O(\varepsilon).
    \end{align*}
    Furthermore, since $\frac{d}{d\theta}(1-e^{i\theta}) = -ie^{i\theta}$, the mean value theorem implies that
    \begin{align*}
      &\lim_{n\rightarrow \infty} \sup_{|\xi|<R} \|1 - e^{it(-2\xi\cdot(\xi_n-\xi_0) + |\xi_n|^2 - |\xi_0|^2 + \tau_n - \tau_0)}\|_{L^q(|(t,x)|<R)} \\
      &\lesssim \lim R^{\frac{d+1}{q}}\sup_{|\xi|< R,\; |(t,x)|<R} |1 - e^{it(-2\xi\cdot(\xi_n-\xi_0) + |\xi_n|^2 - |\xi_0|^2 + \tau_n - \tau_0)}| \\
      &\leq \lim R^{\frac{d+1}{q}} \sup |t(-2\xi\cdot(\xi_n-\xi_0) + |\xi_n|^2 - |\xi_0|^2 + \tau_n - \tau_0)| \\
      &\leq \lim R^{\frac{d+1}{q}} R (2R|\xi_n-\xi_0| + ||\xi_n|^2 - |\xi_0|^2| + |\tau_n - \tau_0|) \\
      &= 0.
    \end{align*}
    Hence
    $\limsup_{n\rightarrow\infty} \|\mathcal{E}_{(\tau_0,\xi_0)}f - \mathcal{E}_{(\tau_n,
      \xi_n)}f\|_{L^q(\R^{d+1})} = O(\varepsilon)$ and, taking $\varepsilon \rightarrow 0$,
    \[
        \lim_{n\rightarrow\infty} \|\mathcal{E}_{(\tau_0,\xi_0)}f - \mathcal{E}_{(\tau_n,
          \xi_n)}f\|_{L^q(\R^{d+1})} = 0.
    \]
    
    Conversely, assume $\lim \tau_n \neq \tau_0$ or $\lim \xi_n \neq \xi_0$.
    
    Consider the distance between the two paraboloids over the point $\xi \in \R^d$,
    $h_n(\xi) := |\xi-\xi_0|^2 - |\xi-\xi_n|^2 + \tau_0 - \tau_n$. We rearrange to find
    $h_n(\xi) = 2\xi\cdot(\xi_n - \xi_0) + |\xi_0|^2 + \tau_0 - |\xi_n|^2 - \tau_n$. Let
    $A_n = \{\xi : h_n(\xi) = 0\}$. We claim that for any fixed $d$-ball $R > 0$,
    \begin{align}\label{separation ineq}
      \limsup_n \inf_{\{\xi : |\xi| < R, \;\dist(\xi, A_n) > s\}} |h_n(\xi)| =: c_{s,R} > 0.
    \end{align}
    Indeed, $c_{s,R} \geq \limsup_n 2s|\xi_n - \xi_0|$ by differentiating $h_n$. If this quantity is
    zero, then $\lim \xi_n = \xi_0$ and therefore $\lim \tau_n \neq \tau_0$ in this case. Let
    $\varepsilon > 0$. There exists an $N > 0$ such that
    \[
        \sup_{\{\xi : |\xi|< R\}} \left| |\xi - \xi_0|^2 - |\xi - \xi_n|^2\right| < \varepsilon
    \]
    for all $n > N$, so
    \[
        c_{s,R} \geq \limsup_n \inf_{\{\xi : |\xi| < R\}} |h_n(\xi)| \geq \limsup_n |\tau_n - \tau_0|
        - \varepsilon.
    \]
    Since $\limsup_n |\tau_n - \tau_0| > 0$ in the case we're considering, we can take $\varepsilon$
    small enough to show that $c_{s,R} > 0$, proving \eqref{separation ineq}.
    
    We would like to construct a function $\Psi \in C^\infty_{cpct}(\R^{d+1})$ such that after
    passing to a subsequence in $n$,
    \begin{enumerate}
        \item $|\langle fd\sigma, \Psi\rangle| > \frac{1}{2}$, and
        \item $\lim_n \sup_{\xi \in B(0,R)} |\Psi(\xi, |\xi - \xi_n|^2 + \tau_n)| = 0$.
    \end{enumerate}
    Let $\eta \in \C^\infty_{cpct}(\R)$ be a non-negative bump function with
    $\supp \eta \subset B(0,1)$ and $\eta|_{B(0,1/2)} \equiv 1$. Also let
    $\Phi \in C^\infty_{cpct}(\R^d)$ be such that $\|\Phi\|_{p'} = 1$ and
    $|\langle f, \Phi\rangle| > \frac{3}{4}$, and take $R > 0$ so that $\supp \Phi \subset
    B(0,R)$. Note that, since the zero set of $h$ is a $(d-1)$-hyperplane,
    \[
        |\{\xi : \dist(\xi, A_n) < s\} \cap B(0,R)| \leq c_{d-1}R^{d-1}s
    \]
    where $c_{d-1}$ is a dimensional constant. Therefore, there exists an $s_0 > 0$ such that
    \[
        \|\Phi - \left[1-\eta\left(\frac{\dist(\xi, A_n)}{2s_0}\right)\right]\Phi\|_{p'} <
        \frac{1}{4},
    \]
    and hence
    \begin{equation*}
        |\langle f, \left[1-\eta\left(\frac{\dist(\xi, A_n)}{2s_0}\right)\right]\Phi\rangle| > \frac{1}{2}
    \end{equation*}
    for all $n$. Now let
    \[
        \Psi_n(\tau, \xi) := \eta\left(3\frac{\tau - \tau_0 - |\xi-\xi_0|^2}{c_{s_0,R}}\right)
        \left[1-\eta\left(\frac{\dist(\xi, A_n)}{2s_0}\right)\right]\Phi(\xi).
    \]
    By \eqref{separation ineq}, $\Psi_n(|\xi-\xi_n|^2 + \tau_n, \xi) = 0$ for all $\xi \in
    B(0,R)$. In addition, $|\langle fd\sigma, \Psi_n\rangle| > \frac{1}{2}$ by construction.
    
    Since the space of all $(d-1)$-planes intersecting the $d$-ball $\overline{B(0,R+10s_0)}$ is
    compact, either $A_n \cap \overline{B(0, R+10s_0)} = \emptyset$ for sufficiently large $n$, or
    there exists a hyperplane $A$ such that for a subsequence, $A_n \rightarrow A$ in the sense that
    \[
        \lim_n \sup_{\substack{\xi \in A\cap \overline{B(0,R+10s_0)}, \\ \zeta \in A_n\cap
            \overline{B(0,R+10s_0)}}} |\xi - \zeta| = 0.
    \]
    In the first case, let
    \[
        \Psi(\tau, \xi) := \eta\left(3\frac{\tau - \tau_0 - |\xi-\xi_0|^2}{c_{s_0,R}}\right)
        \Phi(\xi),
    \]
    and in the second, pass to the subsequence mentioned above and let
    \[
        \Psi(\tau, \xi) := \eta\left(3\frac{\tau - \tau_0 - |\xi-\xi_0|^2}{c_{s_0,R}}\right)
        \left[1-\eta\left(\frac{\dist(\xi, A)}{2s_0}\right)\right]\Phi(\xi).
    \]
    In either case, we see that $\Psi_n \rightarrow \Psi$ in $C^\infty_{cpct}$. Condition 1 holds
    since $fd\sigma \in \mathcal{S}'(\R^{d+1})$ and hence
    $\langle fd\sigma, \Psi_n\rangle \rightarrow \langle fd\sigma, \Psi\rangle$. Condition 2 holds
    by the triangle inequality and the fact that it's satisfied for each $\Psi_n$ individually.
    
    By Plancherel and condition 1,
    $|\langle \mathcal{E}_{(\tau_0,\xi_0)}f, \widetilde{\widehat{\Psi}}\rangle| > \frac{1}{2}$. On
    the other hand, by condition 2,
    $\lim_n |\langle \mathcal{E}_{(\tau_n,\xi_n)}g_n, \widetilde{\widehat{\Psi}}\rangle| = 0$. Thus
    \begin{equation*}
        \lim_{n\rightarrow \infty} \|\mathcal{E}_{(\tau_0,\xi_0)} f - \mathcal{E}_{(\tau_n,\xi_n)} g_n\|_q \geq \frac{1}{2\|\widehat{\Psi}\|_{q'}} \neq 0,
    \end{equation*}
    which is a contradiction and proves the proposition.
\end{proof}

\begin{proof}[Proof of Theorem \ref{translated theorem} part 3]
    By Proposition \ref{weak limit},
    \begin{equation*}
        \|\mathcal{E}f_n\|_q, \|\mathcal{E}_{(\tau_0, \xi_0)}g_n\|_q \rightarrow A_p\quad \text{and}\quad \|\mathcal{E}f_n + \mathcal{E}_{(\tau_0, \xi_0)}g_n\|_q \rightarrow 2A_p,
    \end{equation*}
    so uniform convexity implies that
    $\|\mathcal{E}f_n - \mathcal{E}_{(\tau_0, \xi_0)}g_n\|_q \rightarrow 0$.
    
    Let $\phi \in C^\infty_{cpct}(\R^d)$. Set
    \[
        L = \sup_{\xi \in \supp \phi} \left| |\xi-\xi_0|^2 + \tau_0 - |\xi|^2\right|
    \]
    and let $\eta \in C^\infty_{cpct}(\R)$ be such that $\eta|_{B(0, 2L)} \equiv 1$. Now let
    $\psi(\tau, \xi) = \phi(\xi)\eta(\tau - |\xi|^2)$.
    
    By construction of $\eta$,
    \begin{multline*}
        \lim_{n\rightarrow \infty} |\langle f_n - g_n, \phi\rangle_{L^2(\R^d)}| = \lim_n |\langle f_nd\sigma - g_nd\sigma', \psi\rangle_{L^2(\R^{d+1})}| \\
        = \lim_n |\langle \mathcal{E}f_n - \mathcal{E}_{(\tau_0,\xi_0)}g_n, \widehat{\psi}\rangle|
        \leq \lim_n \|\mathcal{E}f_n - \mathcal{E}_{(\tau_0, \xi_0)}g_n\|_q \|\widehat{\psi}\|_{q'}
        = 0,
    \end{multline*}
    which proves that $f_n - g_n \rightharpoonup 0$. Indeed, since
    $\|f_n\|_p - \|g_n\|_p \rightarrow 0$ by Proposition \ref{weak limit}, uniform convexity implies
    that the convergence is strong, $\|f_n - g_n\|_p \rightarrow 0$. Since $f_n$ is an extremizing
    sequence, by \cite[Theorem 1.1]{Stovall} there exists a subsequence in $n$,
    $\{S_n\} \subset \mathbf{S}$, and an extremizer $f \in L^p$ such that $S_n f_n \rightarrow f$ so
    we have $\|f - S_ng_n\|_p \rightarrow 0$ along this subsequence as well, i.e.\ \eqref{sequence repn} holds.
    
    Recall that the symmetries $T_n \mathcal{E}_{(\tau_0, \xi_0)} g_n$ can be expressed in the form
    of \eqref{canonical symmetry T'} as
    \begin{multline}\label{translated symmetry}
        \mathcal{E}_{(\lambda^{-2}_n(\tau_0 + 2\xi_0\cdot \widetilde{\xi}_n), \lambda^{-1}_n\xi_0)} \lambda_n^{d/p}e^{i(t_n, x_n)\cdot(|\lambda_n\xi - \widetilde{\xi}_n - \xi_0|^2 + \tau_0, \lambda_n\xi - \widetilde{\xi}_n)}g_n(\lambda_n\xi - \widetilde{\xi}_n)(t,x) \\
        = \mathcal{E}_{(\lambda^{-2}_n(\tau_0 + 2\xi_0\cdot \widetilde{\xi}_n),
          \lambda^{-1}_n\xi_0)}(e^{it_n(-2\lambda_n\xi\cdot\xi_0 + 2\xi_0\cdot\widetilde{\xi}_n + |\xi_0|^2 +
          \tau_0)} S_ng_n)(t, x)
    \end{multline}
    for $\lambda_n \in \R^+$, $(t_n, x_n) \in \R \times \R^d$ and $\widetilde{\xi}_n \in \R^d$. Proposition
    \ref{translated fourier support condition} immediately implies that
    $(\lambda_n^{-2}(\tau_0 + 2\xi_0\cdot\widetilde{\xi}_n),\lambda_n^{-1}\xi_0) \rightarrow 0$, which also
    implies $\lambda_n \rightarrow \infty$ and $\lambda_n^{-2}\xi_0\cdot\widetilde{\xi}_n \rightarrow 0$ since
    we assumed $|\tau_0| + |\xi_0| \neq 0$. Since $S_ng_n \rightarrow f$ in $L^p$, by the triangle
    inequality and the forward part of Proposition \ref{translated fourier support condition},
    \[
        \left\|\mathcal{E}f - \mathcal{E}(e^{it_n(-2\lambda_n\xi\cdot \xi_0 + 2\xi_0\cdot\widetilde{\xi}_n +
              |\xi_0|^2 + \tau_0)} f)(t, x)\right\|_q \rightarrow 0
    \]
    as well. Since $\R/(2\pi\Z)$ is compact, we may pass to a subsequence along which
    $\theta := \lim t_n(2\xi_0\cdot\widetilde{\xi}_n + |\xi_0|^2 + \tau_0)/(2\pi\Z)$ exists. By rearranging,
    applying the boundedness of $\mathcal{E}$, and invoking dominated convergence,
    \[
        \left\| \mathcal{E}e^{i\theta}f(t,x-2\lambda_nt_n\xi_0) -
            \mathcal{E}(e^{it_n(-2\lambda_n\xi\cdot \xi_0 + 2\xi_0\cdot\widetilde{\xi}_n + |\xi_0|^2 + \tau_0)}
            f)\right\|_q \rightarrow 0.
    \]
    The two preceding limits imply that
    \[
        \left\| \mathcal{E}e^{i\theta}f(t, x-2\lambda_n t_n \xi_0) - \mathcal{E}f\right\|_q
        \rightarrow 0.
    \]
    Hence the sequence $\{\lambda_n t_n\}$ must be bounded and there exists a subsequence along
    which $c := \lim \lambda_n t_n$ exists. Since $\lambda_n \rightarrow \infty$, $\lim t_n = 0$. As
    translation is continuous in $L^q$,
    \[
        \left\|\mathcal{E}f - \mathcal{E}e^{i\theta}f(t,x-2c\xi_0) \right\|_q = 0.
    \]
    From this is is clear that $c = 0$ as $|\mathcal{E}f(t,x)| = |\mathcal{E}f(t,x - 2c\xi_0)|$
    would imply that $\mathcal{E}f \equiv 0$ since $\mathcal{E}f \in L^q$. We now also see that
    $e^{i\theta}=1$ by strict convexity.
    
    This proves the claim.
\end{proof}

\begin{bibdiv}
    \begin{biblist}
        \bib{Christ Quilodran}{article}{
          title={Gaussians rarely extremize adjoint Fourier restriction inequalities for paraboloids},
          volume={142},
          number={3},
          journal={Proc. Amer. Math. Soc.},
          publisher={American Mathematical Society (AMS)},
          author={Christ, M.}, author={Quilodrán, R.},
          date={2013},
          pages={887–896} }
        \bib{Christ Shao}{article}{
          title={Existence of extremals for a Fourier restriction inequality},
          author={Christ, M.}, author={Shao, S.},
          journal={Anal. PDE.},
          number={2},
          pages={261--312},
          date={2012} }
        \bib{Foschi 2007}{article}{
          title={Maximizers for the Strichartz inequality},
          author={Foschi, D.},
          journal={J. Eur. Math. Soc.},
          volume={9},
          date={2007},
          number={4},
          pages={739--774} }
        \bib{Foschi 2015}{article}{
          title={Global maximizers for the sphere adjoint Fourier restriction inequality},
          author={Foschi, D.},
          journal={J. Funct. Anal.},
          number={3},
          volume={268},
          pages={690-702},
          date={2015} }
        \bib{Foschi e Silva 2017}{article}{
          title={Some recent progress on sharp Fourier restriction theory},
          volume={43},
          number={2},
          journal={Analysis Mathematica},
          publisher={Springer Science and Business Media LLC},
          author={Foschi, D.}, author={Oliveira e Silva, D.},
          date={2017},
          pages={241–265} }
        \bib{Frank Lieb Sabin}{article}{
          title={Maximizers for the Stein-Tomas inequality},
          author={Frank, R.}, author={Lieb, E. H.}, author={Sabin, J.},
          journal={Geom. Funct. Anal.},
          volume={26},
          number={4},
          pages={1095--1134},
          date={2016} }
        \bib{LiebLoss}{book}{
          title={Analysis},
          author={Lieb, E. H.}, author={Loss, M.},
          date={2001},
          publisher={American Mathematical Society},
          address={Providence, RI} }
        \bib{Shao}{article}{
          title={Maximizers for the Strichartz inequalities and the Sobolev-Strichartz inequalities for the Schr\"odinger equation},
          author={Shao, S.},
          journal={Electron. J. Differential Equations},
          number={3},
          pages={13},
          date={2009} }
        \bib{Stovall}{article}{
          title={Extremizability of Fourier restriction to the paraboloid},
          author={B. Stovall},
          status={Preprint},
          eprint={arXiv:1804.03605} }
    \end{biblist}
\end{bibdiv}

\end{document}